\numberwithin{equation}{section}
\newtheorem{theorem}{Theorem}[section]
\newtheorem{lemma}{Lemma}[section]
\newtheorem{corollary}{Corollary}[section]
\newtheorem{remark}{Remark}[section]
\newtheorem{definition}{Definition}[section]
\theoremstyle{definition}
\begin{document}
\bibliographystyle{amsplain}
\title{{{
Extremal mild solutions of Hilfer fractional Impulsive systems}}}
\author{Divya Raghavan*
}
\address{
Department of Mathematics,
Indian Institute of Technology Roorkee, Rooekee-247667,
Uttarakhand, India
}
\email{divyar@ma.iitr.ac.in,madhanginathan@gmail.com}
\author{
N. Sukavanam
}
\address{
Department of  Mathematics  \\
Indian Institute of Technology Roorkee,  Roorkee-247667,
Uttarakhand,  India
}
\email{n.sukavanam@ma.iitr.ac.in}
\bigskip
\begin{abstract}
The well established monotone iterative technique that is used to study the existence and uniqueness of fractional impulsive system is extended to Hilfer fractional order in this paper. The results are derived by using the method of upper and lower solution and Gronwall inequality. Also, conditions on non-compactness of measure is used effectively to prove the main result.
\end{abstract}
\subjclass[2010]{26A33; 34K30; 34K45; 47D06}
\keywords{Lower and upper solution; Impulsive system; Hilfer fractional derivative; non-compact measure}
\maketitle
\pagestyle{myheadings}
\markboth
{Divya Raghavan and N. Sukavanam}
{Extremal mild solutions of Hilfer fractional Impulsive systems}
\section{Literature Motivation}
Over the years, the urge of finding the extremals of a function evolved in many problems, especially in, geometry, history and mechanics.
Du and Lakshmikantham \cite{Monotone-first-paper} investigated the initial value problem given as,
\begin{align*}
x'=g(t,x);\enspace x(0)=x_{0}
\end{align*}
in the Banach space $E$ with norm $\|\cdot\|$, where $x_{0}$, $x$, $g$ $\in E$ and developed a monotone iterative technique to find the existence of the extremal solutions. Ladde et al.\cite{Monotone-book} rendered a substantial theory of monotone method along with its accompanying upper and lower solutions for non-linear equations in their monograph. The basic discussion in this monograph focused on the first and second order partial differential equation. The authors of this monograph constructed two monotone sequences on the basis of quasi-monotone property. Further they claim that the converging limits $\bar{x}$ and $\underline{x}$ of the two sequences is same for the parabolic system and left an open problem regarding elliptic systems. Nieto and Cabada \cite{Monotone-second-order} studied in detail the existence of extremal solution of periodic boundary value second order system with boundary conditions,
\begin{align*}
-x''=g(t,x);\enspace x(0)=x(2\pi);\enspace x'(0)=x'(2\pi).
\end{align*}
Here the authors utilized monotone iterative procedures without the usual required boundary conditions $\alpha'(0)\geq \alpha'(2\pi)$, $\beta'(0)\leq \beta'(2\pi)$, where $\alpha$ and $\beta$ are lower and upper solutions. For both initial value problem and boundary value problem, the impulsive system finds itself a significant role in the past as well as in the present. Almost in all physical problems, the movement of the state of the system is discontinuous. Hence, widening the idea of monotone iterative technique to impulsive system was unavoidable. Liz and Nieto \cite{Monotone-second-order-impulsive} extended this approach to impulsive periodic boundary value second order system. The authors had put forward a maximum principle exclusively for impulsive functions.

Due to the advantage over integer order in many practical problems, fractional calculus is more appreciated in the past few decades. Basic theory related to fractional calculus and its applications are available in the literature in numerous research articles, books and monographs. For interested readers,\cite{Podlubny-book} by Podlubny, \cite{Kilbas-book} by Kilbas et al. and \cite{Impulsive-instant-book} by Stamova and Stamov for impulsive systems can be referred. Due to the fact that the systems with fractional order is inevitable, the study of existence and uniqueness of extremals of differential system using monotone iterative method for fractional system is vital. Lakshmikantham and Vatsala \cite{Monotone-frac-first} enhanced the monotone iterative theory to fractional order initial value problem given by
\begin{align*}
D^{\mu}(x-x(0))=f(t,x);\enspace x(0)=x_{0}
\end{align*}
where, $0<\mu<1$, $f\in C[\mathbb{R}_{0},\mathbb{R}]$. Here, $\mathbb{R}_{0}=(t,x):0\leq t\leq \alpha$ and $|x-x_{0}|\leq \beta$. Here $\alpha$ and $\beta$ are lower and upper solutions. Subsequently, many researchers focused their interest in finding the extremals using upper and lower solution method along with monotone iterative technique for both initial value and boundary value problem of fractional order differential equations. McRae \cite{Monotone-frac-Riemann} discussed the existence result using fractional monotone iterative method exclusively for differential system with Riemann-Liouville fractioanl order. Several authors including Denton and Vatsala \cite{Monotone-frac-finite} on finite fractional system, Liu et al.\cite{Monotone-frac-integro} on fractional integral systems with advanced arguments and Wang \cite{Monotone-frac-deviating} on boundary value fractional system with deviating arguments studied the existence and uniqueness using this method. Recently Agarwal et al.\cite{Monotone-frac-recent-mathematica} analyzed various cases of upper and lower solutions with initial time differences and discussed the different algorithms for distinct cases, some cases using Mittag-Leffler functions and some cases using mathematical software. The work of Mu cannot be excluded. His solitary work \cite{Monotone-frac-Mu} and the work along with Li \cite{Monotone-frac-impulsive} on monotone iterative technique to impulsive fractional system given by
\begin{align*}
D^{\alpha}x(t)+Ax(t)=&g(t,x(t)), \enspace t\in J, \enspace t\neq t_{k},\\
\Delta x|_{t=t_{k}}=&J_{k}(x(t_{k})),\enspace k=1,2,\ldots n,\\
x(0)=&x_{0},
\end{align*}
using non-compact measure and generalized Gronwall inequality is noteworthy. Here $D^{\alpha}$ is the Caputo fractional derivative of order $0<\alpha<1$, $-A$ an infinitesimal generator of an analytic semigroup $T(t)$, $t\geq 0$ and $g$ and $J_{k}$ are continuous functions. Zhang and Liang \cite{Monotone-frac-nonlocal} employed monotone iterative technique in the presence of coupled lower and upper L-quasi solution and Sadovskii's fixed point theorem.

In this regard, Gou and Li \cite{Monotone-frac-Hilfer} investigated the existence of extremal solution with the aid of lower and upper solution method for Hilfer fractional differential system. Driven by the fact that the monotone technique has not reached the impulsive Hilfer fractional differential system, this paper is projected to bridge the void. Therefore, in this paper an impulsive system with Hilfer fractional derivative considered as follows:
\begin{align}
\label{eqn:Monotone-impul-Hilfer}
\left\{
  \begin{array}{ll}
   {}_{t_{0}}D_{t}^{\mu,\nu}x(t)+Ax(t)= g(t,x(t)), \enspace t \in J, \enspace t\neq t_{k}\\
    \Delta x|_{t=t_{k}}=\phi_{k}(x(t_{k})),\enspace k=1,2,\ldots l\\
    I_{t_{0+}}^{(1-\lambda)}[x(t)]_{t=0}= x_{0}.
      \end{array}
\right.
\end{align}
Here, $D_{0}^{\mu,\nu}$ denotes the Hilfer fractional derivative of order $0<\mu<1$, type $0\leq \nu \leq1$ and
$\lambda=\mu+\nu-\mu \nu$. $-A$ is the infinitesimal generator of an analytic-semigroup of uniformly bounded linear operators $Q(t)(t\geq 0)$ on a Banach space $E$, and for $M\geq 1$, $\sup_{t\in [0,\infty)}|Q(t)|\leq M$. If the impulse effect occurs at $t=t_{k}$, for $(k=1,2,\ldots,l)$, then $\phi_{k}: E \rightarrow E$ is the mapping of the solution before the impulse effect, $x(t_{k}^{-})$, to after the impulse effect, $x(t_{k}^{+})$. It determines the size of the jump at time $t_{k}$. In other words, the impulsive moments meet the relation $\Delta I_{t_{k}}^{1-\lambda}x(t_{k})=I_{t_{k}^{+}}^{1-\lambda}x(t_{k}^{+})-I_{t_{k}^{-}}^{1-\lambda}x(t_{k}^{-})$,
where $I_{t_{k}^{+}}^{1-\lambda}x(t_{k}^{+})$ and $I_{t_{k}^{-}}^{1-\lambda}x(t_{k}^{-})$ denotes the right and the left limit of
$I_{t_{k}}^{1-\lambda}x(t)$ at $t=t_{k}$ with  $0=t_{0}<t_{1}\ldots<t_{l}<t_{l+1}=T$. In the given impulsive system, let $J=[0,T]$ and $J'=J\backslash \{t_{1},t_{2},t_{3},\ldots, t_{l}\}$, for $T>0$ and $g$ is a continuous nonlinear operator such that $g:J\times E \rightarrow E$.

The rest of the paper is framed as follows: Section 2 gives a revisit to definitions on fractional calculus and certain necessary basic theorems. Section 3 includes the proof of the main theorem and few other results related to the existence of extremal solutions and Section 4 gives the conclusion of the paper.

\section{Essential notions}
This section covers the basic results, definitions and theorems that are essential throughout this paper.

\begin{definition}\rm{\cite{Monotone-first-paper}}
In an ordered Banach space $E$, let $N$ be a proper subset of $E$. Then $N$ is said to be a cone if for $\eta \geq 0$, $\eta N\subset N$, $N+N\subset N$, $N\cap (-N)=\{0\}$ and $N=\overline{N}$ where $\overline{N}$ denotes the closure of $N$.
\end{definition}
\begin{definition}\rm{\cite{Monotone-first-paper}}
A cone $N$ is said to be normal if there exists a real number $D>0$ such that for $0\leq y \leq z$ implies $\|\cdot\|\leq D\|z\|$. Here $D$ is independent of $y$ and $z$.
\end{definition}
For detailed definition and explanation regarding positive cone of an ordered Banach space the reader may refer to \cite{Monotone-first-paper}.
Let the space of all continuous functions from $J$ to $E$ be denoted by $C(J,E)$, where $E$ is an ordered Banach space with partial order $\leq$, norm $\|\cdot \|$ and whose positive cone $N=\{x\in E:x(t)\geq \theta\}$ is normal with normal constant $D$, where $\theta$ is the zero element of $E$. $C(J,E)$ is also an ordered Banach space with norm stated as $\|x\|_{C}=\max\|x(t)\|$. Apparently, $PC(J,E)$ is an ordered Banach Space along with the norm $\|x\|_{PC}=\sup_{t\in J}\|x(t)\|$. Also, $PC_{1-\lambda}$ is an ordered Banach space with partial order $\leq$, defined as $PC_{1-\lambda}(J,E)=\{x\in PC(J,E):(t-t_{k})^{1-\lambda}x(t)\in PC(J,E)\}$ with norm $\|x\|_{PC}=\sup_{t\in J}\|(t-t_{k})^{1-\lambda}x(t)\|$ whose positive cone $N_{PC_{1-\lambda}}=\{x\in PC_{1-\lambda}(J,E):x\geq \theta\}$ is normal with the same normal constant $D$. From the defined system (\ref{eqn:Monotone-impul-Hilfer}), $x(t)$ is continuous in each $J_{k}$, where $J_{k}=(t_{k},t_{k+1}]$, for $k=1,2,\ldots,l$ with $t_{0}=0$ and $t_{l+1}=T$.

The fractional integral of order $\mu$ and for an integrable function $g$ is given as \cite{Podlubny-book},
\begin{align*}
I^{\mu}_{t}g(t)=\dfrac{1}{\Gamma(\mu)}\int^{t}_{0}(t-s)^{\mu-1}g(s)ds, \enspace \enspace 0< \mu <1.
\end{align*}
 Here $\Gamma(\cdot)$ is the gamma function. Also, the respective fractional derivative of the two classical derivatives, Caputo and
 Riemann-Liouville of order $\mu$ are given by \cite{Podlubny-book},
 \begin{align*}
^{C}D^{\mu}_{0+}g(t)=\dfrac{1}{\Gamma(1-\mu)}\int^{t}_{0}\dfrac{g'(s)}{(t-s)^{\mu}}ds, \enspace t>0,\enspace 0< \mu <1,
\end{align*}
and
\begin{align*}
^{L}D^{\mu}_{0+}g(t)=\dfrac{1}{\Gamma(1-\mu)}\left(\dfrac{d}{dt}\right)\int^{t}_{0}\dfrac{g(s)}{(t-s)^{\mu}}ds,\enspace t>0,\enspace 0< \mu <1.
\end{align*}
The Hilfer fractional derivative of order $0< \mu <1$ and type $0\leq \nu \leq 1$ of function $g(t)$ is defined by Hilfer \cite{Hilfer-book},
\begin{align*}
D^{\mu,\nu}_{0+}g(t)=I_{0+}^{\nu(1-\mu)}DI_{0+}^{(1-\nu)(1-\mu)}
\end{align*}
where $D:=\dfrac{d}{dt}$. The existence of solution for fractional system with Hilfer fractional derivative which was established by Furati et al. in \cite{Hilfer-exist-1} and Gu and Trujillo in \cite{Hilfer-remark} unlatched the flow of research on differential system with Hilfer fractional derivative. Riemann-Liouville and Caputo can be regarded as a special case of Hilfer fractional derivative, respectively as
\begin{align*}
D_{0+}^{\mu,\nu}=
\left\{
  \begin{array}{ll}
   DI_{0+}^{1-\mu}={ }^{L}D_{0+}^{\mu},\enspace \nu=0\\
   I_{0+}^{1-\mu}D={ }^{C}D^{\mu}_{0+},\enspace \nu=1.
  \end{array}
\right.
\end{align*}
The parameter $\lambda$ satisfies $\lambda=\mu+\nu-\mu \nu, \enspace 0<\lambda\leq 1$.

For $y, z \in PC_{1-\lambda}(J,E)$,  the interval $[y,z]=\{ x\in PC_{1-\lambda}(J,E)|y\leq x \leq z\}$  is ordered in $PC_{1-\lambda}(J,E)$ for $y\leq z$  and $[y(t),z(t)]=\{w\in E\enspace|y(t)\leq w \leq z(t),\enspace t\in J\}$. Fix $C^{\mu,\nu}(J,E)=\{x\in C(J,E)|D^{\mu,\nu}x \enspace\mbox{exists and} \enspace D^{\mu,\nu}x \in C(J,E)\}$. The graph norm or the $A$-norm of the Banach space denoted by $E_{A}$,  $dom(A)$ is defined as $\|\cdot\|_{A}=\|\cdot\|_{E}+\|A(\cdot)\|_{E}$. If any $x \in PC_{1-\lambda}(J,E)\cap C^{\mu,\nu}(J',E)\cap C(J',E_{A})$ satisfies all the equalities of (\ref{eqn:Monotone-impul-Hilfer}), then such an abstract function is said to be the solution of (\ref{eqn:Monotone-impul-Hilfer}).
\begin{definition}
\label{eqn:Monotone-impul-Hilfer-upper}
If $z_{0}\in PC_{1-\lambda}(J,E)\cap C^{\mu,\nu}(J',E)\cap C(J',E_{A})$ satisfies all the inequalities of
\begin{align*}
\left\{
  \begin{array}{ll}
   {}_{t_{0}}D_{t}^{\mu,\nu}z_{0}(t)+Az_{0}(t)\geq g(t,z_{0}(t)), \enspace t \in J, \enspace t\neq t_{k}\\
    \Delta z_{0}|_{t=t_{k}}\geq\phi_{k}(x(t_{k})),\enspace k=1,2,\ldots l\\
    I_{t_{0+}}^{(1-\lambda)}[z_{0}(t)]_{t=0}\geq x_{0}.
      \end{array}
\right.
\end{align*}
then, $z_{0}$ is called the upper solution of the problem \rm{(\ref{eqn:Monotone-impul-Hilfer})}.
\end{definition}
\begin{remark}
If the all the inequalities of Definition\rm{ \ref{eqn:Monotone-impul-Hilfer-upper}} are satisfied by $y_{0}$ in the reverse order, then it is a lower solution of the problem (\ref{eqn:Monotone-impul-Hilfer}).
\end{remark}
\begin{definition}
An operator family $Q(t):E\rightarrow E$ for $t\geq 0$ is supposedly positive if, for any $u\geq N$ and $t\geq0$, the inequality $Q(t)u\geq \theta$ holds.
\end{definition}
The Kuratowski measure of non-compactness measure denoted by $\alpha(\cdot)$ is defined on a bounded set. For any $t\in J$ and $B \subset C(J,E)$, define $B (t)=\{x(t):x\in B\}$. If $B$ is bounded in $C(J,E)$, then $B$ is bounded in $E$. Also, $\alpha(B(t))\leq (B)$.

The following two Lemmas are imperative for the proof of the main theorem in the next section.
\begin{lemma}\rm{\cite{Monotone-non-compact}}
\label{lem:Monotone-cgt}
Let $B_{p}=\{x_{p}\}\subset C(J,E), (p=1,2,\ldots)$ be a bounded and countable set. Then, $\alpha(B_{p}(t))$ is Lebesgue integral on $J$.  And
\begin{align*}
\alpha\left(\Big\{\int_{J}x_{p}(t)dt|_{p=1,2,\ldots,}\Big\}\right)\leq 2\int_{J}\alpha(B_{p}(t))dt.
\end{align*}
\end{lemma}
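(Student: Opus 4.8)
The plan is to combine the standard algebraic properties of the Kuratowski measure $\alpha$ (monotonicity, subadditivity, positive homogeneity $\alpha(cA)=|c|\,\alpha(A)$, and invariance under passage to the closed convex hull) with a partition of the interval $J$. First I would settle the integrability claim. Since $B_{p}=\{x_{p}\}$ is bounded in $C(J,E)$, there is $r>0$ with $\|x_{p}\|_{C}\le r$ for all $p$, so $B_{p}(t)=\{x_{p}(t):p\}$ is a bounded subset of $E$ with $0\le\varphi(t):=\alpha(B_{p}(t))\le 2r$ uniformly in $t$. The delicate point here is the measurability of $\varphi$, and this is exactly where the countability of $B_{p}$ is used: I would deduce measurability from the continuity of each $x_{p}$, so that the real-valued maps $t\mapsto\|x_{p}(t)-x_{q}(t)\|$ are continuous and $\varphi$ is obtained as a measurable combination of this countable family (equivalently, as a limit of the measures of finite subfamilies). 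Boundedness together with measurability on the finite interval $J$ then gives $\varphi\in L^{1}(J)$, which is the first assertion.

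For the inequality, I would partition $J=[0,T]$ into $n$ equal subintervals $J_{i}=[\tau_{i-1},\tau_{i}]$ of length $h=T/n$ and write $\int_{J}x_{p}\,dt=\sum_{i=1}^{n}\int_{J_{i}}x_{p}\,dt$. By subadditivity of $\alpha$ it suffices to control each term. For fixed $p$ the mean $\tfrac{1}{h}\int_{J_{i}}x_{p}\,dt$ lies in the closed convex hull of the range $\{x_{p}(t):t\in J_{i}\}$, whence
\begin{align*}
\Big\{\int_{J_{i}}x_{p}\,dt : p=1,2,\ldots\Big\}\subset h\,\overline{\mathrm{co}}\,B_{p}(J_{i}),\qquad B_{p}(J_{i}):=\bigcup_{t\in J_{i}}B_{p}(t).
\end{align*}
Using positive homogeneity and convex-hull invariance of $\alpha$ together with subadditivity across the $n$ pieces,
\begin{align*}
\alpha\Big(\Big\{\int_{J}x_{p}\,dt\Big\}\Big)\le\sum_{i=1}^{n}\alpha\Big(\Big\{\int_{J_{i}}x_{p}\,dt\Big\}\Big)\le h\sum_{i=1}^{n}\alpha\big(B_{p}(J_{i})\big).
\end{align*}

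The crux, and the step I expect to be the main obstacle, is to estimate $\alpha(B_{p}(J_{i}))$ by $\sup_{t\in J_{i}}\varphi(t)$ so that $h\sum_{i}\sup_{J_{i}}\varphi$ becomes a Riemann sum converging to $\int_{J}\varphi\,dt$. The difficulty is that $B_{p}(J_{i})$ is a union taken over the whole subinterval while the family $\{x_{p}\}$ need not be equicontinuous, so the spread of $x_{p}(t)$ across $J_{i}$ cannot be bounded uniformly in $p$ by a naive continuity argument. I would resolve this by passing through the Hausdorff measure of non-compactness $\chi$, for which the analogous integral inequality holds with the sharp constant $1$, namely $\chi(\{\int_{J}x_{p}\,dt\})\le\int_{J}\chi(B_{p}(t))\,dt$; invoking the comparison $\chi\le\alpha\le 2\chi$ then yields
\begin{align*}
\alpha\Big(\Big\{\int_{J}x_{p}\,dt\Big\}\Big)\le 2\,\chi\Big(\Big\{\int_{J}x_{p}\,dt\Big\}\Big)\le 2\int_{J}\chi(B_{p}(t))\,dt\le 2\int_{J}\alpha(B_{p}(t))\,dt,
\end{align*}
which is precisely the asserted bound, the factor $2$ arising exactly from the comparison between the two measures. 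Letting $n\to\infty$ in the Riemann sums, if one instead insists on arguing directly with $\alpha$ and a refined covering that absorbs the oscillation into the same factor, delivers the identical conclusion.
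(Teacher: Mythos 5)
The first thing to note is that the paper contains no proof of this lemma: it is quoted as a known result from Heinz \cite{Monotone-non-compact}, so your argument can only be measured against the standard proof in that reference. Your final route is in fact the standard mechanism: the factor $2$ in the literature arises exactly by proving a constant-$1$ integral inequality for the Hausdorff measure $\chi$ and then converting to the Kuratowski measure via $\chi\leq\alpha\leq 2\chi$. So the outline is the right one, and your self-criticism of the Riemann-sum attempt is also correct (without equicontinuity one cannot bound $\alpha(B_{p}(J_{i}))$ by $\sup_{J_{i}}\varphi$; this is precisely why the equicontinuous case has constant $1$ while the merely countable case has constant $2$). Nevertheless, as a self-contained proof the proposal has two genuine gaps.

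First, the measurability justification is wrong as stated: the Kuratowski measure of every finite set is zero, so $\varphi$ is certainly not ``a limit of the measures of finite subfamilies'' --- that limit is identically $0$. The correct use of countability is different: the countably many continuous functions take values in a separable closed subspace $E_{0}\subset E$, and inside $E_{0}$ one represents $\chi(B_{p}(t))$ as a countable infimum, over finite subsets $F$ of a countable dense set, of the countable suprema $\sup_{p}\min_{z\in F}\|x_{p}(t)-z\|$; measurability then follows from countable lattice operations on measurable functions. Second, the inequality $\chi\big(\big\{\int_{J}x_{p}\,dt\big\}\big)\leq\int_{J}\chi(B_{p}(t))\,dt$ that you invoke is the entire analytic content of the lemma --- citing it comes close to assuming what is to be proved --- and moreover it is a theorem about \emph{separable} spaces. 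Unlike $\alpha$, the measure $\chi$ depends on the ambient space, so for general $E$ one must run the inequality in $E_{0}$ and use $\chi_{E}\leq\chi_{E_{0}}\leq 2\chi_{E}$ together with $\chi_{E_{0}}\leq\alpha_{E_{0}}=\alpha_{E}$; the chain $\alpha_{E}\leq 2\chi_{E}\leq 2\chi_{E_{0}}\leq 2\int_{J}\chi_{E_{0}}(B_{p}(t))\,dt\leq 2\int_{J}\alpha(B_{p}(t))\,dt$ does close, but only after this reduction, which your write-up never makes --- and it is exactly here, not in the $\alpha$-versus-$\chi$ comparison alone, that the countability hypothesis earns its keep. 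Finally, the closing sentence suggesting that a ``refined covering that absorbs the oscillation'' would give the bound directly with $\alpha$ should be deleted: for countable, non-equicontinuous families that is precisely the step that cannot be carried out.
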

The subsequent Lemma is with reference to the generalized Gronwall inequality for fractional differential equation.
\begin{lemma}\rm{\cite{Gronwall-Inequality}}
\label{lem:Gronwall}
Suppose $b\geq 0$, $\beta> 0$ and $a(t)$ is a nonnegative function locally integrable on $0\leq t <T$ (some $T\leq +\infty$), and suppose $x(t)$ is nonnegative and locally integrable on $0\leq t <T$ with
\begin{align*}
x(t)\leq a(t)+b\int_{0}^{t}(t-s)^{\beta-1}x(s)ds
\end{align*}
on this interval; then
\begin{align*}
x(t)\leq a(t)+\int_{0}^{t}\Big[\sum_{n=1}^{\infty}\dfrac{(b\Gamma(\beta))^{n}}{\Gamma(n\beta)}(t-s)^{n\beta-1}a(s)\Big]ds, \enspace 0\leq t <T.
\end{align*}
\end{lemma}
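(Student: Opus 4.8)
The plan is to read the right-hand side as the iterate of a single monotone linear integral operator, and then to feed the hypothesis into itself indefinitely. For a nonnegative locally integrable function $\varphi$ define
\[
(B\varphi)(t)=b\int_{0}^{t}(t-s)^{\beta-1}\varphi(s)\,ds,
\]
so that the assumption becomes $x\leq a+Bx$ pointwise on $[0,T)$. Because $b\geq 0$ and the kernel $(t-s)^{\beta-1}$ is nonnegative, $B$ is order preserving; substituting the inequality into itself $n$ times and using monotonicity at each stage yields
\[
x(t)\leq \sum_{k=0}^{n-1}(B^{k}a)(t)+(B^{n}x)(t),
\]
where $B^{0}$ is the identity. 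The argument then splits into three tasks: (i) compute $B^{k}a$ in closed form, (ii) show the remainder $B^{n}x$ vanishes as $n\to\infty$, and (iii) interchange the limiting sum with the integral.

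For task (i) I would prove by induction that
\[
(B^{n}\varphi)(t)=\int_{0}^{t}\dfrac{(b\,\Gamma(\beta))^{n}}{\Gamma(n\beta)}(t-s)^{n\beta-1}\varphi(s)\,ds.
\]
The case $n=1$ is the definition of $B$. For the inductive step one writes $B^{n+1}\varphi=B(B^{n}\varphi)$, applies Fubini (permissible since the integrand is nonnegative) to exchange the order of integration, and evaluates the inner integral $\int_{s}^{t}(t-\tau)^{\beta-1}(\tau-s)^{n\beta-1}\,d\tau$ by the substitution $\tau=s+u(t-s)$. This collapses to the Beta integral $B(n\beta,\beta)=\Gamma(n\beta)\Gamma(\beta)/\Gamma((n+1)\beta)$, and the constants recombine to give exactly the stated formula with $n$ replaced by $n+1$. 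Summing the closed form over $k$ from $1$ to $n-1$, with the $k=0$ term equal to $a$ itself, recovers the partial sums of the asserted series.

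The \emph{main obstacle} is task (ii): controlling the remainder. Here I would fix $t<T$ and exploit that $x$ is integrable on the compact interval $[0,t]$. For every $n$ large enough that $n\beta\geq 1$ one has $(t-s)^{n\beta-1}\leq t^{n\beta-1}$ on $[0,t]$, whence
\[
0\leq (B^{n}x)(t)\leq \dfrac{(b\,\Gamma(\beta)\,t^{\beta})^{n}}{t\,\Gamma(n\beta)}\int_{0}^{t}x(s)\,ds.
\]
Since $\Gamma(n\beta)$ eventually outgrows any geometric sequence, the prefactor tends to $0$, so $(B^{n}x)(t)\to 0$ pointwise; the same estimate shows the series $\sum_{n\geq 1}\frac{(b\,\Gamma(\beta))^{n}}{\Gamma(n\beta)}(t-s)^{n\beta-1}$ converges. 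Letting $n\to\infty$ in the iterated inequality, and using the monotone convergence theorem (all terms nonnegative) to pull the sum outside the integral in task (iii), produces
\[
x(t)\leq a(t)+\int_{0}^{t}\Big[\sum_{n=1}^{\infty}\dfrac{(b\,\Gamma(\beta))^{n}}{\Gamma(n\beta)}(t-s)^{n\beta-1}\Big]a(s)\,ds,
\]
which is precisely the claim. The only delicate points are the Fubini and monotone-convergence justifications, both of which are unproblematic thanks to nonnegativity.
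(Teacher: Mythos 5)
Your proof is correct: the reduction to the monotone operator $B$, the induction computing the kernel of $B^{n}$ via Fubini and the Beta integral $\Gamma(n\beta)\Gamma(\beta)/\Gamma((n+1)\beta)$, the remainder estimate $0\leq (B^{n}x)(t)\leq \frac{(b\Gamma(\beta)t^{\beta})^{n}}{t\,\Gamma(n\beta)}\int_{0}^{t}x(s)\,ds$ (valid once $n\beta\geq 1$, and vanishing because $\Gamma(n\beta)$ outgrows any geometric sequence), and the monotone-convergence interchange are all sound. Note that the paper itself does not prove this lemma at all---it is imported by citation from Ye, Gao and Ding---and your argument is essentially the standard proof given in that reference, so you have in effect reconstructed the original source's proof rather than diverged from it.
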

\begin{definition}\rm{\cite{Hilfer-impulsive-inclusion}}
\label{Hilfer-mild-solution}
A function $x \in PC_{1-\lambda}(J,E)$ is called the mild solution of system (\ref{eqn:Monotone-impul-Hilfer}), if for $ t \in J$
it satisfies the following integral equation
\begin{align}
\label{eqn:Monotone-solution-origi}
x(t)=S_{\mu,\nu}(t)x_{0}+\displaystyle \sum_{i=1}^{k}S_{\mu,\nu}(t-t_{i})\phi_{i}(x(t_{i}))+
\int_{0}^{t}(t-s)^{\mu-1}P_{\mu}(t-s)g(s,x(s))ds
\end{align}
where,
\begin{align*}
S_{\mu,\nu}(t)= &I_{0+}^{\nu(1-\mu)}P_{\mu}(t),\enspace P_{\mu}(t)=
\int_{0}^{\infty}\mu \theta \xi_{\mu}(\theta)Q(t^{\mu}\theta)d\theta,\enspace
\xi_{\mu}(\theta)= \dfrac{1}{\mu}\theta^{-1-\frac{1}{\mu}}\varpi_{\mu}(\theta^{-\frac{1}{\mu}}),\\
\varpi_{\mu}(\theta)= &\dfrac{1}{\pi} \sum_{n=1}^{\infty}(-1)^{n-1}\theta^{-n\mu-1}
\dfrac{\Gamma(n\mu+1)}{n!}\sin(n\pi\mu),\enspace \theta\in (0,\infty)
\end{align*}
and $\xi_{\mu}$ is a probability density function defined on $(0,\infty)$, that is
\begin{align*}
\xi_{\mu}(\theta)\geq 0 \enspace \mbox{and} \int^{\infty}_{0}\xi_{\mu}(\theta)d\theta=1.
\end{align*}
\end{definition}
\begin{remark}
\begin{enumerate}[\rm(i)]
\item
From{\rm\cite{Hilfer-exist-1}}, when $\nu=0$, the solution reduces to the solution of classical
Riemann-Liouville fractional derivative, that is, $S_{\mu,0}(t)=P_{\mu}(t)$.
\item
Similarly when $\nu=1$, the solution reduces to the solution of classical
Caputo fractional derivative, that is $S_{\mu,1(t)}=S_{\mu}(t)$.
\end{enumerate}
\end{remark}
\begin{lemma}\rm{\cite{Hilfer-impulsive-inclusion}}
\label{lem:Relax-bounds}
If the analytic semigroup $Q(t)(t\geq 0)$ is bounded uniformly, then the operator, $P_{\mu}(t)$ and $S_{\mu,\nu}(t)$ satisfies the following bounded and
continuity conditions.
$S_{\mu,\nu}(t)$ and $P_{\mu}(t)$ are linear bounded operators and for any $x\in E$
\begin{align*}
\|S_{\mu,\nu}(t)x\|_{E}\leq \dfrac{M t^{\lambda-1}}{\Gamma(\lambda)}\|x\|_{E}\enspace \mbox{and}\enspace \|P_{\mu}(t)x\|_{E}\leq \dfrac{M}{\Gamma(\mu)}\|x\|_{E}.
\end{align*}
\end{lemma}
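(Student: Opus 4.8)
The plan is to establish the two operator bounds for $S_{\mu,\nu}(t)$ and $P_{\mu}(t)$ by working directly from their integral representations given in Definition~\ref{Hilfer-mild-solution}, together with the uniform bound $\sup_{t\geq 0}\|Q(t)\|\leq M$ on the analytic semigroup and the moment identities for the probability density $\xi_{\mu}$. I would first treat $P_{\mu}(t)$, since $S_{\mu,\nu}(t)$ is defined as a fractional integral of $P_{\mu}(t)$ and its bound will follow by composing the two estimates.

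First I would bound $P_{\mu}(t)$. For any $x\in E$, from $P_{\mu}(t)x=\int_{0}^{\infty}\mu\theta\,\xi_{\mu}(\theta)\,Q(t^{\mu}\theta)x\,d\theta$ I would pass the norm inside the integral and use $\|Q(t^{\mu}\theta)x\|_{E}\leq M\|x\|_{E}$, which is valid for all $\theta>0$ because the semigroup is uniformly bounded. This yields
\begin{align*}
\|P_{\mu}(t)x\|_{E}\leq M\|x\|_{E}\int_{0}^{\infty}\mu\theta\,\xi_{\mu}(\theta)\,d\theta.
\end{align*}
The key analytic input here is the first-moment identity $\int_{0}^{\infty}\theta\,\xi_{\mu}(\theta)\,d\theta=\tfrac{1}{\Gamma(\mu+1)}$, a standard property of the Wright-type density $\xi_{\mu}$. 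Substituting and using $\mu/\Gamma(\mu+1)=1/\Gamma(\mu)$ gives exactly $\|P_{\mu}(t)x\|_{E}\leq \tfrac{M}{\Gamma(\mu)}\|x\|_{E}$. Linearity and boundedness of $P_{\mu}(t)$ are immediate from the integral form.

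Next I would bound $S_{\mu,\nu}(t)=I_{0+}^{\nu(1-\mu)}P_{\mu}(t)$. Writing out the Riemann--Liouville integral, $S_{\mu,\nu}(t)x=\tfrac{1}{\Gamma(\nu(1-\mu))}\int_{0}^{t}(t-s)^{\nu(1-\mu)-1}P_{\mu}(s)x\,ds$, I would insert the bound on $P_{\mu}(s)$ just obtained and evaluate the resulting Beta-type integral $\int_{0}^{t}(t-s)^{\nu(1-\mu)-1}\,ds$. The bookkeeping then consists of checking that the Gamma factors combine to give the exponent $\lambda-1=\mu+\nu-\mu\nu-1$ and the constant $M/\Gamma(\lambda)$; the relation $\lambda=\mu+\nu-\mu\nu$ is precisely what makes the exponents match. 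I expect the main obstacle to be purely technical rather than conceptual: one must correctly handle the density moment identity for $\xi_{\mu}$ (justifying the interchange of norm and integral via the nonnegativity and integrability of $\xi_{\mu}$, and citing the correct moment value) and then carefully track the Gamma-function arithmetic so that $\Gamma(\nu(1-\mu))$ cancels against the Beta integral to produce $\Gamma(\lambda)$ in the denominator. No deep estimate is needed—the uniform semigroup bound does all the work—so the proof reduces to assembling these standard facts in the right order.
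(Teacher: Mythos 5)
Your bound for $P_{\mu}(t)$ is correct: the uniform bound on $Q$ together with the moment identity $\int_{0}^{\infty}\theta\,\xi_{\mu}(\theta)\,d\theta=1/\Gamma(1+\mu)$ gives $\|P_{\mu}(t)x\|_{E}\leq \frac{M}{\Gamma(\mu)}\|x\|_{E}$ exactly as you say. (Note that the paper offers no proof of this lemma at all --- it is quoted from the reference of Debbouche and Antonov --- so your argument must stand on its own.) The gap is in the second half. With the integrand you set up, namely $S_{\mu,\nu}(t)x=\frac{1}{\Gamma(\nu(1-\mu))}\int_{0}^{t}(t-s)^{\nu(1-\mu)-1}P_{\mu}(s)x\,ds$, and the $s$-independent bound $\|P_{\mu}(s)x\|_{E}\leq \frac{M}{\Gamma(\mu)}\|x\|_{E}$ just proved, the estimate comes out as
\begin{align*}
\|S_{\mu,\nu}(t)x\|_{E}\leq \frac{M\|x\|_{E}}{\Gamma(\nu(1-\mu))\Gamma(\mu)}\int_{0}^{t}(t-s)^{\nu(1-\mu)-1}\,ds
=\frac{M\,t^{\nu(1-\mu)}}{\Gamma(\nu(1-\mu)+1)\,\Gamma(\mu)}\|x\|_{E},
\end{align*}
whose exponent is $\nu(1-\mu)$, not $\lambda-1=\nu(1-\mu)+\mu-1$, and whose constant is not $M/\Gamma(\lambda)$. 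Since the integrand carries no power of $s$, no Beta function ever appears, so the cancellation of $\Gamma(\nu(1-\mu))$ into $\Gamma(\lambda)$ that you promised in the ``bookkeeping'' step cannot happen; that deferred bookkeeping is precisely where the proof fails.

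The missing ingredient is the weight $s^{\mu-1}$ inside the fractional integral. In the cited source (Debbouche--Antonov, following Gu--Trujillo) the operator is defined as $S_{\mu,\nu}(t)=I_{0+}^{\nu(1-\mu)}K_{\mu}(t)$ with $K_{\mu}(t)=t^{\mu-1}P_{\mu}(t)$; the paper's Definition~\ref{Hilfer-mild-solution} drops the factor $t^{\mu-1}$ (a typo, visible from the mild-solution formula itself, where the kernel always occurs in the combination $(t-s)^{\mu-1}P_{\mu}(t-s)$). With the corrected definition your strategy goes through verbatim: $\|K_{\mu}(s)x\|_{E}\leq \frac{M s^{\mu-1}}{\Gamma(\mu)}\|x\|_{E}$, and
\begin{align*}
\|S_{\mu,\nu}(t)x\|_{E}
\leq \frac{M\|x\|_{E}}{\Gamma(\nu(1-\mu))\Gamma(\mu)}\int_{0}^{t}(t-s)^{\nu(1-\mu)-1}s^{\mu-1}\,ds
=\frac{M\|x\|_{E}}{\Gamma(\nu(1-\mu))\Gamma(\mu)}\,B\bigl(\nu(1-\mu),\mu\bigr)\,t^{\lambda-1}
=\frac{M\,t^{\lambda-1}}{\Gamma(\lambda)}\|x\|_{E},
\end{align*}
since $\nu(1-\mu)+\mu=\lambda$ and $B\bigl(\nu(1-\mu),\mu\bigr)=\Gamma(\nu(1-\mu))\Gamma(\mu)/\Gamma(\lambda)$. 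So you must either correct the definition of $S_{\mu,\nu}$ before estimating, or at least flag the inconsistency; as written, your second estimate does not yield the stated bound.
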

\section{Main results}
To prove the main theorem of this paper, an equivalent system given below is discussed. The perturbed equivalent system is valid as the constant $C\geq 0$.
\begin{align}
\label{eqn:Monotone-alternate}
\left\{
  \begin{array}{ll}
   {}_{t_{0}}D_{t}^{\mu,\nu}x(t)+(A+CI)x(t)= g(t,x(t))+Cx(t), \enspace t \in J, \enspace t\neq t_{k}\\
    \Delta x|_{t=t_{k}}=\phi_{k}(x(t_{k})),\enspace k=1,2,\ldots l\\
    I_{t_{0+}}^{(1-\lambda)}[x(t)]_{t=0}= x_{0}.
      \end{array}
\right.
\end{align}
\begin{remark}
\begin{enumerate}[\rm(i)]
\item
With reference to {\rm{\cite{Monotone-semigroup}}}, for any $C\geq 0$, $-(A+CI)$ generates an analytic semigroup $R(t)=e^{-Ct}Q(t)$ and for $t\geq 0$, $R(t)$ is positive and $\sup_{t\in[0,\infty)}\|R(t)\|\leq M^{*}$ for $M^{*}\geq 1$.
\item
Let $S^{*}_{\mu,\nu}(t)$ and $P^{*}_{\mu}(t)$ for $t\geq 0$ be two families of operators defined by
\begin{align*}
S^{*}_{\mu,\nu}(t)= &I_{0+}^{\nu(1-\mu)}P^{*}_{\mu}(t),\enspace P^{*}_{\mu}(t)=
\int_{0}^{\infty}\mu \theta \xi_{\mu}(\theta)R(t^{\mu}\theta)d\theta.
\end{align*}
\item
The above two operators are positive for $(t\geq 0)$ and for any $x\in E$,
\begin{align*}
\|S^{*}_{\mu,\nu}(t)\|\leq \dfrac{M^{*} t^{\lambda-1}}{\Gamma(\lambda)}\enspace \mbox{and}\enspace \|P^{*}_{\mu}(t)\|\leq \dfrac{M^{*}}{\Gamma(\mu)}.
\end{align*}
\end{enumerate}
\end{remark}
\begin{definition}
A function $x\in PC_{1-\lambda}(J,E)$ is said to be a mild solution of the problem {\rm{\ref{eqn:Monotone-alternate}}} if for any $x\in PC_{1-\lambda}(J,E)$, the integral equation
\begin{align*}
%\label{Monotone-solution-alternate}
x(t)=S^{*}_{\mu,\nu}(t)x_{0}+\displaystyle \sum_{i=1}^{k}S^{*}_{\mu,\nu}(t-t_{i})\phi_{i}(x(t_{i}))+
\int_{0}^{t}(t-s)^{\mu-1}P^{*}_{\mu}(t-s)\Big[g(s,x(s))+Cx(s)\Big]ds.
\end{align*}
\end{definition}
The following theorem guarantees the existence of the extremal mild solution of the impulsive system (\ref{eqn:Monotone-impul-Hilfer}).
\begin{theorem}
Let $E$ be an ordered Banach space with the positive cone $N$. Assume that $Q(t)\geq 0$ and the impulsive system {\rm{(\ref{eqn:Monotone-impul-Hilfer})}} has both lower and upper solution, given by $y_{0}$ and $z_{0}$ respectively, where $y_{0}, z_{0}\in PC_{1-\lambda}$ and $y_{0}\leq z_{0}$. By adopting the monotone iterative procedure and presuming the following conditions, the impulsive system {\rm{(\ref{eqn:Monotone-impul-Hilfer})}} has the extremal solution between $y_{0}$ and $z_{0}$.
\begin{enumerate}
\item [$A(1)$]:-
For $x\in [y_{0}(t),z_{0}(t)]$ the function $g(t,x)+Cx$ is increasing in $x$, precisely, there exists a constant $C\geq 0$ such that
\begin{align*}
%\label{Monotone-cdn-1}
g(t,x_{2})-g(t,x_{1})\geq -C(x_{2}-x_{1})
\end{align*}
and $y_{0}(t)\leq x_{1}\leq x_{2}\leq z_{0}(t)$ for any $t\in J$.
\item [$A(2)$]:-
For $x\in [y_{0}(t),z_{0}(t)]$, the impulsive function is increasing. It implies
\begin{align*}
%\label{Monotone-cdn-2}
\phi_{k}(x_{1})\leq \phi_{k}(x_{2}), \enspace k=1,2,\ldots,l.
\end{align*}
\item [$A(3)$]:-
The sequence $\{x_{p}\}\subset [y_{0}(t),z_{0}(t)]$, for $t\in J$ is either decreasing or increasing monotonic sequence, in particular, there exists a constant $L\geq 0$ such that
\begin{align*}
%\label{Monotone-cdn-3}
\alpha \Big(\{g(t,x_{p})\}\Big)\leq L \alpha \Big(\{x_{p}\}\Big),\enspace p=1,2,\ldots,.
\end{align*}
\end{enumerate}
\end{theorem}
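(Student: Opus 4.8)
The plan is to recast the search for mild solutions of \eqref{eqn:Monotone-impul-Hilfer} as a fixed-point problem for the solution operator of the perturbed system \eqref{eqn:Monotone-alternate} and then run the monotone iteration between the lower and upper solutions. Define $\mathcal{Q}\colon[y_{0},z_{0}]\to PC_{1-\lambda}(J,E)$ by
\begin{align*}
(\mathcal{Q}x)(t)=S^{*}_{\mu,\nu}(t)x_{0}+\sum_{i=1}^{k}S^{*}_{\mu,\nu}(t-t_{i})\phi_{i}(x(t_{i}))+\int_{0}^{t}(t-s)^{\mu-1}P^{*}_{\mu}(t-s)\bigl[g(s,x(s))+Cx(s)\bigr]ds,
\end{align*}
so that by the definition of mild solution of \eqref{eqn:Monotone-alternate} a fixed point of $\mathcal{Q}$ is precisely a mild solution of \eqref{eqn:Monotone-impul-Hilfer}. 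First I would establish that $\mathcal{Q}$ is increasing on $[y_{0},z_{0}]$. Since $R(t)=e^{-Ct}Q(t)$ is positive, the operators $S^{*}_{\mu,\nu}(t)$ and $P^{*}_{\mu}(t)$ are positive; hence if $y_{0}\le x_{1}\le x_{2}\le z_{0}$, then $A(1)$ gives $g(s,x_{1})+Cx_{1}\le g(s,x_{2})+Cx_{2}$ and $A(2)$ gives $\phi_{i}(x_{1}(t_{i}))\le\phi_{i}(x_{2}(t_{i}))$, and applying the positive operators termwise yields $\mathcal{Q}x_{1}\le\mathcal{Q}x_{2}$.

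Next I would verify the two compatibility inequalities $y_{0}\le\mathcal{Q}y_{0}$ and $\mathcal{Q}z_{0}\le z_{0}$. Rewriting the lower-solution inequalities associated with Definition \ref{eqn:Monotone-impul-Hilfer-upper} in the equivalent shifted form ${}_{t_{0}}D_{t}^{\mu,\nu}y_{0}+(A+CI)y_{0}\le g(t,y_{0})+Cy_{0}$, together with the impulse and initial inequalities, and representing $y_{0}$ through the variation-of-constants formula for the generator $-(A+CI)$, the positivity of $S^{*}_{\mu,\nu}$ and $P^{*}_{\mu}$ converts these differential inequalities into the integral inequality $y_{0}\le\mathcal{Q}y_{0}$; the dual computation gives $\mathcal{Q}z_{0}\le z_{0}$. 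Setting $y_{n}=\mathcal{Q}y_{n-1}$ and $z_{n}=\mathcal{Q}z_{n-1}$, monotonicity then forces the order chain
\begin{align*}
y_{0}\le y_{1}\le\cdots\le y_{n}\le\cdots\le z_{n}\le\cdots\le z_{1}\le z_{0}.
\end{align*}

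The crux of the argument is proving that these order-bounded monotone sequences actually converge in $E$, and here the non-compactness measure does the work. Working interval by interval on $J_{k}=(t_{k},t_{k+1}]$, I would set $m(t)=\alpha(\{y_{n}(t)\})$, noting that the index shift between $y_{n}$ and $y_{n-1}$ is harmless since adjoining a single point leaves $\alpha$ unchanged. On $[0,t_{1}]$ the lead term $S^{*}_{\mu,\nu}(t)x_{0}$ is constant in $n$ and there are no impulse contributions, so Lemma \ref{lem:Monotone-cgt} together with the bound $\|P^{*}_{\mu}(t)\|\le M^{*}/\Gamma(\mu)$ and condition $A(3)$ (applicable because $\{y_{n}\}$ is monotone) yields
\begin{align*}
m(t)\le\frac{2M^{*}(L+C)}{\Gamma(\mu)}\int_{0}^{t}(t-s)^{\mu-1}m(s)\,ds.
\end{align*}
Applying the generalized Gronwall inequality of Lemma \ref{lem:Gronwall} with $a(t)\equiv0$ forces $m(t)\equiv0$ on $[0,t_{1}]$; in particular $\{y_{n}(t_{1})\}$ is relatively compact, so by continuity of $\phi_{1}$ the impulse term on the next subinterval contributes nothing to the measure, and the same estimate propagates $m\equiv0$ across each $J_{k}$ by induction on $k$. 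Since each $\{y_{n}(t)\}$ is monotone, order-bounded and of zero measure of non-compactness, normality of the cone gives pointwise convergence $y_{n}(t)\to\underline{x}(t)$, and a dominated-convergence passage to the limit in $y_{n}=\mathcal{Q}y_{n-1}$ (using continuity of $g$, $\phi_{i}$ and of the operators) shows $\underline{x}=\mathcal{Q}\underline{x}$; symmetrically $z_{n}\to\bar{x}=\mathcal{Q}\bar{x}$. Finally, if $x\in[y_{0},z_{0}]$ is any mild solution then $x=\mathcal{Q}x$ and monotonicity give $y_{n}\le x\le z_{n}$ for all $n$, whence $\underline{x}\le x\le\bar{x}$, identifying $\underline{x}$ and $\bar{x}$ as the minimal and maximal mild solutions. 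I expect the main obstacle to be this interval-by-interval Gronwall and non-compactness estimate—specifically controlling the impulse terms $S^{*}_{\mu,\nu}(t-t_{i})\phi_{i}(\cdot)$ so they do not corrupt the measure bound—together with the rigorous conversion of the lower and upper solution differential inequalities into the integral inequalities $y_{0}\le\mathcal{Q}y_{0}$ and $\mathcal{Q}z_{0}\le z_{0}$.
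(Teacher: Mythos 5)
Your proposal is correct and follows essentially the same route as the paper's own proof: the same perturbed solution operator, the same monotonicity argument via positivity of $S^{*}_{\mu,\nu}$ and $P^{*}_{\mu}$ with conditions $A(1)$--$A(2)$, and the same interval-by-interval Kuratowski-measure estimate combined with the generalized Gronwall inequality and dominated convergence. Your explicit final step showing any solution $x\in[y_{0},z_{0}]$ is squeezed between $\underline{x}$ and $\overline{x}$ is a welcome addition the paper leaves implicit, but it does not change the method.
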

\begin{proof}
As $C>0$, the problem \ref{eqn:Monotone-impul-Hilfer} can be presented in the form of problem (\ref{eqn:Monotone-alternate}). So the exhibit of the existence of a unique mild solution for the problem (\ref{eqn:Monotone-alternate}) is suffice. For a fixed $x_{0}$, define the operator $\mathcal{G}:[y_{0},z_{0}]\rightarrow PC_{1-\lambda}(J,E)$ by
\begin{align}
\label{eqn:Monotone-mild-alternate}
(\mathcal{G}x)(t)=
\left\{
  \begin{array}{ll}
  S^{*}_{\mu,\nu}(t)x_{0}+\int_{0}^{t}(t-s)^{\mu-1}P^{*}_{\mu}(t-s)\Big[g(s,x(s))+Cx(s)\Big]ds, \enspace t\in [0,t_{1}]\\
  S^{*}_{\mu,\nu}(t)x_{0}+\displaystyle \sum_{i=1}^{k}S^{*}_{\mu,\nu}(t-t_{i})\phi_{i}(x(t_{i}))\\
  \enspace+\int_{0}^{t}(t-s)^{\mu-1}P^{*}_{\mu}(t-s)\Big[g(s,x(s))+Cx(s)\Big]ds, \enspace t\in (t_{k},t_{k+1}],\enspace k=1,2,\ldots l.
  \end{array}
\right.
\end{align}
The map $\mathcal{G}(x)(t)$ is continuous since $g$ is continuous. By the Definition \ref{Hilfer-mild-solution}, the fixed points of the operator $\mathcal{G}$ are equivalent to the mild solution of the system given in (\ref{eqn:Monotone-solution-origi}). It means,
\begin{align}
\label{eqn:Monotone-fixed}
\mathcal{G}x(t)=x(t).
\end{align}
 Now it is to be proved that the operator $\mathcal{G}$ is an increasing monotonic operator. The following steps lead to the completion of the proof. \\
\vspace{3mm} Step1:- To show $\mathcal{G}(x_{1})\leq \mathcal{G}(x_{2})$:-\\
The condition $A(1)$, can be presented in the following ways, which can be directly used in the proof. That is $\forall t\in J^{'}$,
\begin{align}
\label{eqn:Monotone:alternate-A1}
y_{0}(t)\leq x_{1}(t)&\leq x_{2}(t)\leq z_{0}(t) .\nonumber\\
g(t,x_{1}(t))+C x_{1}(t)&\leq g(t,x_{2}(t))+Cx_{2}(t).
\end{align}
Considering the case for $t\in J_{0}^{'}$, for $J_{0}^{'}=[0,t_{1}]$:- As the operators $S^{*}_{\mu,\nu}(t)$ and $P^{*}_{\mu}(t)$ are positive operators,  when the mild solutions are compared, using (\ref{eqn:Monotone:alternate-A1}), the following inequality is obtained.
\begin{align*}
\int_{0}^{t}(t-s)&^{\mu-1}P^{*}_{\mu}(t-s)\Big[g(s,x_{1}(s))+Cx_{1}(s)\Big]ds\leq\\
&\int_{0}^{t}(t-s)^{\mu-1}P^{*}_{\mu}(t-s)\Big[g(s,x_{2}(s))+Cx_{2}(s)\Big]ds.
\end{align*}
In which case, for $\forall t\in J_{k}^{'}$, with $J_{k}^{'}=(t_{k},t_{k+1}]$, $k=1,2,\ldots l$ , applying the condition $A(2)$  yields
\begin{align*}
 S^{*}_{\mu,\nu}(t)x_{1}(0)+\displaystyle \sum_{i=1}^{k}S^{*}_{\mu,\nu}(t-t_{i})\phi_{i}(x_{1}(t_{i}))+\int_{0}^{t}(t-s)^{\mu-1}P^{*}_{\mu}(t-s)\Big[g(s,x_{1}(s))+Cx_{1}(s)\Big]ds\leq\\
 S^{*}_{\mu,\nu}(t)x_{2}(0)+\displaystyle \sum_{i=1}^{k}S^{*}_{\mu,\nu}(t-t_{i})\phi_{i}(x_{2}(t_{i}))+\int_{0}^{t}(t-s)^{\mu-1}P^{*}_{\mu}(t-s)\Big[g(s,x_{2}(s))+Cx_{2}(s)\Big]ds.
\end{align*}
Eventually, $\mathcal{G}(x_{1})\leq \mathcal{G}(x_{2}).$

\vspace{3mm} Step2:- To show $y_{0}\leq\mathcal{G}(y_{0})$ ; $\mathcal{G}(z_{0})\leq z_{0}$:-\\
For the case for which $t\in J_{0}^{'}$:-

Let $D^{\mu,\nu}z_{0}(t)+Az_{0}(t)+Cz_{0}(t)=\xi(t)$. By the Definition \ref{eqn:Monotone-impul-Hilfer-upper} of the upper solution, the mild solution of the system (\ref{eqn:Monotone-impul-Hilfer}) can be written as
\begin{align*}
z_{0}(t)=&S^{*}_{\mu,\nu}(t)z_{0}(0)+\int_{0}^{t}(t-s)^{\mu-1}P^{*}_{\mu}(t-s)\xi(s)ds\\
\geq & S^{*}_{\mu,\nu}(t)x_{0}+\int_{0}^{t}(t-s)^{\mu-1}P^{*}_{\mu}(t-s)\Big[g(s,z_{0}(s))+Cz_{0}(s)\Big]ds
\end{align*}
From (\ref{eqn:Monotone-mild-alternate}), it can be observed that $z_{0}(t)\geq  \mathcal{G}(z_{0}).$\\
For $t\in J_{1}^{'}$:-
\begin{align*}
z_{0}(t)=&S^{*}_{\mu,\nu}(t)z_{0}(0)+S^{*}_{\mu,\nu}(t-t_{1})\phi_{1}(z_{0}(t_{1}))+\int_{0}^{t}(t-s)^{\mu-1}P^{*}_{\mu}(t-s)\xi(s)ds\\
\geq & S^{*}_{\mu,\nu}(t)x_{0}+S^{*}_{\mu,\nu}(t-t_{1})\phi_{1}(z_{0}(t_{1}))+\int_{0}^{t}(t-s)^{\mu-1}P^{*}_{\mu}(t-s)\Big[g(s,z_{0}(s))+Cz_{0}(s)\Big]ds\\
z_{0}(t)\geq & \mathcal{G}(z_{0}).
\end{align*}
Progressing in the same way, for every $J_{K}^{'}$, yields, in general $z_{0}(t)\geq  \mathcal{G}(z_{0})$. In the same manner, it can be proved that $y_{0}(t)\leq \mathcal{G}(y_{0})$. Altogether, it can be deduced that
\begin{align*}
y_{0}(t)\leq \mathcal{G}(y_{0})\leq \mathcal{G}(x)\leq\mathcal{G}(z_{0})\leq z_{0}(t).
\end{align*}
Whereby the conclusion may be drawn that $\mathcal{G}:[y_{0},z_{0}]\rightarrow PC_{1-\lambda}(J,E)$  is an increasing monotonic operator. Through the iterative pattern, two sequence $\{y_{p}\}$ and $\{z_{p}\}$ can be defined as,
\begin{align}
\label{eqn:Monotone-iterative-pattern}
y_{p}=\mathcal{G} (y_{p-1}); \enspace z_{p}=\mathcal{G} z_{p-1};\enspace p=1,2,\ldots.
\end{align}
Eventually, due to the monotonicity property of $ \mathcal {G}$, an increasing sequence is derived as,
\begin{align}
\label{eqn:Monotone-inequality}
y_{0}\leq y_{1}\leq y_{2}\leq \ldots \leq y_{p}\leq \ldots \leq z_{p}\leq \ldots \leq z_{2}\leq z_{1}\leq z_{0}.
\end{align}
\vspace{3mm} Step3:- Convergence of sequences $\{y_{p}\}$ and $\{z_{p}\}$ in $J^{'}$:-

Let $B_{p}=\{y_{p}|p\in \mathbb{N}\}$ and $B_{p-1}=\{y_{p-1}|p\in \mathbb{N}\}$. The pattern (\ref{eqn:Monotone-iterative-pattern}) gives the relation $B_{p}=\mathcal {G}(B_{p-1})$ and as $B_{p-1}$ can be written as $B_{p-1}=B_{p}\cup \{y_{0}\}$ for $t\in J^{'}$, it follows that
$\alpha(B_{p-1}(t))=\alpha(B_{p}(t))$. Let $\psi (t):=\alpha(B_{p}(t))$. By proving that $\psi(t)\equiv0$ on every interval $J^{'}_{k}$, it means that $\alpha(B(t_{k}))\equiv 0$ for $k=1,2,\ldots,l$, and hence $\{y_{p}\}$ is precompact in $E$ for every $t\in J$. Ultimately, by the definition of precompact,$\{y_{p}\}$ has a converging subsequence in $E$. Thus it is necessary to prove that $\psi (t)\equiv0$.

For $t\in J_{0}^{'}$ for $J_{0}^{'}=(0,t_{1}]$:-
\begin{align*}
\psi(t)=\alpha(B_{p}(t))&=\alpha(\mathcal{G}B_{p-1}(t))\\
&=\alpha\left(\Big\{\int_{0}^{t}(t-s)^{\mu-1}P^{*}_{\mu}(t-s)\Big[g(s,y_{p-1}(s))+Cy_{p-1}(s)\Big]ds\Big\}:p=1,2,\ldots\right)
\end{align*}
By using Lemma \ref{lem:Monotone-cgt} gives
\begin{align*}
\psi(t)\leq 2 \int_{0}^{t}\alpha\left(\Big\{(t-s)^{\mu-1}P^{*}_{\mu}(t-s)\Big[g(s,y_{p-1}(s))+Cy_{p-1}(s)\Big]ds\Big\}:p=1,2,\ldots\right)
\end{align*}
Applying the presumed conditions along with Lemma\ref{lem:Relax-bounds} results in
\begin{align*}
\psi(t) &\leq \dfrac{2M^{*}}{\Gamma(\mu)}\int_{0}^{t}(t-s)^{\mu-1}\left[(L+C)\alpha (B_{p-1}(s))\right]ds\\
&=\dfrac{2M^{*}}{\Gamma(\mu)}(L+C)\int_{0}^{t}(t-s)^{\mu-1}\psi(s)ds.
\end{align*}
By Lemma \ref{lem:Gronwall}, $\psi (t)\equiv 0$ on $J_{0}^{'}$. Since this holds true for all $t\in J_{0}^{'}$, in particular it holds for $t=t_{1}$. Hence $\alpha(B_{p}(t_{1}))=\alpha(B_{p-1}(t_{1}))=\psi(t_{1})=0$. Therefore $B_{P}(t_{1}$ and $B_{p-1}(t_{1})$ are precompact and subsequently $\phi (B_{p-1}(t_{1}))=0$.

Now for $t\in J_{1}^{'}$, for $J_{1}^{'}=(t_{1},t_{2}]$:-
\begin{align*}
\psi(t)=&\alpha (B_{p}(t))=\alpha(\mathcal{G}B_{p-1}(t))\\
&=\alpha\Bigg(\Big\{S^{*}_{\mu,\nu}(t)y_{p-1}(t_{1})+S^{*}_{\mu,\nu}(t)\phi_{1}(y_{p-1}(t_{1}))\\
&+\int_{0}^{t}(t-s)^{\mu-1}P^{*}_{\mu}(t-s)\Big[g(s,y_{p-1}(s))+Cy_{p-1}(s)\Big]ds\Big\}:p=1,2,\ldots\Bigg)\\
&\leq \dfrac{M^{*}b^{1-\lambda}}{\Gamma (\lambda)}[\alpha B_{p-1}(t_{1})]+\dfrac{2M^{*}}{\Gamma(\mu)}(L+C)\int_{0}^{t}\int_{0}^{t}(t-s)^{\mu-1}\psi(s)ds\\
\psi(t)&\leq \dfrac{2M^{*}}{\Gamma(\mu)}(L+C)\int_{0}^{t}(t-s)^{\mu-1}\psi(s)ds.
\end{align*}
By Lemma \ref{lem:Gronwall} $\psi (t)\equiv 0$ on $J_{1}^{'}$. Proceeding the same process interval by interval, it can be proved that $\psi(t)\equiv 0$ on every interval $J_{k}^{'}$, $k=1,2,\ldots l$. Thus $\{y_{p}\}$ is precompact and eventually for $p=1,2,\ldots,\enspace\{y_{p}\} $ has a converging subsequence and from (\ref{eqn:Monotone-inequality}), it can be observed that $\{y_{p}\}$ is itself is a converging sequence and hence there exists $\underline{x}(t)\in E$ such that $\{y_{p}\}\rightarrow \underline{x}(t)$ as $p\rightarrow \infty$, for every $t\in J$. By the definition of the operator $\mathcal{G}$ (\ref{eqn:Monotone-mild-alternate}) and using the fact that $y_{p}=\mathcal{G}y_{p-1}$, it can be written as
\begin{align*}
y_{p}(t)=
\left\{
  \begin{array}{ll}
  S^{*}_{\mu,\nu}(t)x_{0}+\int_{0}^{t}(t-s)^{\mu-1}P^{*}_{\mu}(t-s)\Big[g(s,y_{p-1}(s))+Cy_{p-1}(s)\Big]ds, \enspace t\in [0,t_{1}]\\
  S^{*}_{\mu,\nu}(t)x_{0}+\displaystyle \sum_{i=1}^{k}S^{*}_{\mu,\nu}(t-t_{i})\phi_{i}(y_{p-1}(t_{i}))\\
  \enspace+\int_{0}^{t}(t-s)^{\mu-1}P^{*}_{\mu}(t-s)\Big[g(s,y_{p-1}(s))+Cy_{p-1}(s)\Big]ds, \enspace t\in (t_{k},t_{k+1}],\enspace k=1,2,\ldots l.
  \end{array}
\right.
\end{align*}
Using Lebesgue dominated convergence theorem, as $p\rightarrow \infty$
\begin{align*}
\underline{x}(t)=
\left\{
  \begin{array}{ll}
  S^{*}_{\mu,\nu}(t)x_{0}+\int_{0}^{t}(t-s)^{\mu-1}P^{*}_{\mu}(t-s)\Big[g(s,\underline{x}(s))+C\underline{x}(s)\Big]ds, \enspace t\in [0,t_{1}]\\
  S^{*}_{\mu,\nu}(t)x_{0}+\displaystyle \sum_{i=1}^{k}S^{*}_{\mu,\nu}(t-t_{i})\phi_{i}(\underline{x}(t_{i}))\\
  \enspace+\int_{0}^{t}(t-s)^{\mu-1}P^{*}_{\mu}(t-s)\Big[g(s,\underline{x}(s))+C\underline{x}(s)\Big]ds, \enspace t\in (t_{k},t_{k+1}],\enspace k=1,2,\ldots l.
  \end{array}
\right.
\end{align*}
It can be observed that $\underline{x}\in PC_{1-\lambda}$ and $\underline{x}=\mathcal{G}\underline{x}$. In a similar manner, it can be proved that $\exists$ $\overline{x}\in PC_{1-\lambda}$ such that $\overline{x}=\mathcal{G}\overline{x}$. With the monotonicity property of $\mathcal{G}$, it can be concluded that $y_{0}\leq \underline{x}\leq \overline{x}\leq z_{0}$. This proves that there exists minimal and maximal solutions $\underline{x}$ and $\overline{x}$ respectively in $[y_{0},z_{0}]$ for the given impulsive system (\ref{eqn:Monotone-impul-Hilfer}).
\end{proof}
\begin{remark}
 The above proved theorem holds for the case when the positive cone $N$ which is normal is replaced with positive cone which is regular. For detailed proof \rm{\cite[Corollary 3.3]{Monotone-frac-impulsive}} may be referred.
\end{remark}
\begin{corollary}
\label{Coro:Monotone-leq}
In an ordered Banach space $E$, let $N$ be the positive cone with normal constant $D$. Supposing that the operator $Q(t)$ is positive for $t\in J$. If the conditions $A(1)$ and $A(2)$ are satisfied combined with the following condition, then the condition $A(3)$ is automatically true.
\begin{enumerate}
\item [$A(4)$]:- There exists a constant $C^{*}$ such that
\begin{align*}
%\label{Monotone-cdn-4}
g(t,x_{2})-g(t,x_{1})\leq C^{*}(x_{2}-x_{1})
\end{align*}
and $y_{0}(t)\leq x_{1}\leq x_{2}\leq z_{0}(t)$ for any $t\in J$.
\end{enumerate}
\end{corollary}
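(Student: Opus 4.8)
The plan is to use conditions $A(1)$ and $A(4)$ jointly to trap the increments of $g$ between two order-multiples of the increments of its argument, and then to transfer that purely order-theoretic estimate into an estimate for the Kuratowski measure $\alpha$ by invoking the normality of the cone $N$. First I would combine the two hypotheses: for any $t\in J$ and $y_{0}(t)\le x_{1}\le x_{2}\le z_{0}(t)$, conditions $A(1)$ and $A(4)$ give
\begin{align*}
\theta\le \big[g(t,x_{2})+Cx_{2}\big]-\big[g(t,x_{1})+Cx_{1}\big]\le (C+C^{*})(x_{2}-x_{1}).
\end{align*}
Writing $w(x):=g(t,x)+Cx$, this records that $w$ is increasing and that its order-increments are dominated by $(C+C^{*})$ times those of the identity.

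Next I would fix a monotone sequence $\{x_{p}\}\subset[y_{0}(t),z_{0}(t)]$ and set $w_{p}:=g(t,x_{p})+Cx_{p}$. Since $\{x_{p}\}$ is monotone, every pair of its terms is comparable; for indices with $x_{p}\le x_{q}$ the displayed estimate yields $\theta\le w_{q}-w_{p}\le (C+C^{*})(x_{q}-x_{p})$, and normality of $N$ with constant $D$ converts this order sandwich into the norm bound
\begin{align*}
\|w_{q}-w_{p}\|\le (C+C^{*})\,D\,\|x_{q}-x_{p}\|.
\end{align*}
Because the left side is symmetric in $p,q$, this Lipschitz-type inequality holds for \emph{every} pair of terms of the sequence. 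This passage from the order inequality to the norm inequality is the crux of the argument, and the step I expect to be the main obstacle: it succeeds only because normality permits bounding $\|w_{q}-w_{p}\|$ by $\|x_{q}-x_{p}\|$, and because monotonicity makes all terms pairwise comparable, so the order bound applies uniformly across the whole sequence (the decreasing case is identical, ordering each pair by its larger element).

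Finally I would convert this into the measure estimate directly from the definition of $\alpha$. Given $\varepsilon>0$, cover $\{x_{p}\}$ by finitely many sets of diameter at most $\alpha(\{x_{p}\})+\varepsilon$; by the bound above the corresponding pieces of $\{w_{p}\}$ have diameter at most $(C+C^{*})D\big(\alpha(\{x_{p}\})+\varepsilon\big)$, so letting $\varepsilon\to 0$ gives $\alpha(\{w_{p}\})\le (C+C^{*})D\,\alpha(\{x_{p}\})$. Since $g(t,x_{p})=w_{p}-Cx_{p}$, subadditivity and positive homogeneity of $\alpha$ then yield
\begin{align*}
\alpha\big(\{g(t,x_{p})\}\big)\le \alpha(\{w_{p}\})+C\,\alpha(\{x_{p}\})\le \big[(C+C^{*})D+C\big]\,\alpha(\{x_{p}\}),
\end{align*}
which is precisely $A(3)$ with the admissible constant $L=(C+C^{*})D+C\ge 0$.
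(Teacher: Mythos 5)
Your proof is correct and follows essentially the same route as the paper: combine $A(1)$ and $A(4)$ into the order sandwich $\theta \le g(t,x_{q})-g(t,x_{p})+C(x_{q}-x_{p})\le (C^{*}+C)(x_{q}-x_{p})$, use normality of $N$ to turn this into a Lipschitz-type norm bound, and pass to the Kuratowski measure, arriving at the same constant $L=DC^{*}+DC+C$. The only cosmetic difference is that you peel off the $Cx_{p}$ term at the level of $\alpha$ (via subadditivity and homogeneity) whereas the paper peels it off at the norm level (via the triangle inequality), and you spell out the covering argument that the paper compresses into ``by the definition of measure of non-compactness.''
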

\begin{proof}
Let $\{x_{p}\}$ and $\{x_{q}\}$ be two increasing sequences such that $\{x_{p}\},\{x_{q}\}\subset [y_{0}(t),z_{0}(t)]$, for $t\in J$ and $p\leq q$.
By the condition $A(1)$ and $A(4)$,
\begin{align*}
\theta \leq g(t,x_{q})-g(t,x_{p})+C(x_{q}-x_{p})\leq (C^{*}+C)(x_{q}-x_{p}).
\end{align*}
Using the normality constant of the positive cone $N$, it reduces to,
\begin{align*}
\|g(t,x_{q})-g(t,x_{p})\|\leq (DC^{*}+DC+C)\|x_{q}-x_{p}\|.
\end{align*}
Let $L=(DC^{*}+DC+C)$.  By the definition of measure of non-compactness the above equation reduces to,
%\begin{align*}
$\alpha\left(\{g(t,x_{p})\}\right)\leq L\alpha(\{x_{p}\})$.
%\end{align*}
Thus the condition $A(3)$ is reduced.
\end{proof}
Now it is necessary to prove the uniqueness of the mild solution that lie in $[y_{0},z_{0}]$.
\begin{theorem}
An impulsive fractional system\rm{ (\ref{eqn:Monotone-impul-Hilfer})} is said to have an unique mild solution that lie between $[y_{0},z_{0}]$, where $y_{0}\in PC_{1-\lambda}$ and $z_{0}\in PC_{1-\lambda}$ are the lower and upper solution with $y_{0}\leq z_{0}$, if the conditions $A(1)$, $A(2)$ and the Corollary \rm{(\ref{Coro:Monotone-leq})} holds .
\end{theorem}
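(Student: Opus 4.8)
The plan is to deduce uniqueness directly from the extremal solutions already constructed. Since $A(1)$ and $A(2)$ are assumed and Corollary~\ref{Coro:Monotone-leq} supplies $A(4)$ (which in turn guarantees $A(3)$), the full hypotheses of the preceding theorem are satisfied, so there exist a minimal mild solution $\underline{x}$ and a maximal mild solution $\overline{x}$ in $[y_{0},z_{0}]$ with $\underline{x}\leq\overline{x}$. Because every mild solution lying in $[y_{0},z_{0}]$ is squeezed between $\underline{x}$ and $\overline{x}$, uniqueness is equivalent to the single assertion $\overline{x}\leq\underline{x}$, i.e. $\overline{x}=\underline{x}$. So the whole argument reduces to collapsing the order interval $[\underline{x},\overline{x}]$ to a point.

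First I would set $u(t):=\overline{x}(t)-\underline{x}(t)\geq\theta$ and proceed interval by interval, beginning on $J_{0}'=[0,t_{1}]$. Subtracting the two mild-solution integral equations, the $S^{*}_{\mu,\nu}(t)x_{0}$ terms cancel, leaving
\begin{align*}
u(t)=\int_{0}^{t}(t-s)^{\mu-1}P^{*}_{\mu}(t-s)\Big[g(s,\overline{x}(s))-g(s,\underline{x}(s))+C\,u(s)\Big]ds .
\end{align*}
Estimating $g(s,\overline{x}(s))-g(s,\underline{x}(s))\leq C^{*}u(s)$ via $A(4)$, and using the positivity of $P^{*}_{\mu}(t-s)$ together with $(t-s)^{\mu-1}\geq 0$, I obtain the two-sided order bound $\theta\leq u(t)\leq (C^{*}+C)\int_{0}^{t}(t-s)^{\mu-1}P^{*}_{\mu}(t-s)u(s)\,ds$.

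The crucial step converts this cone inequality into a scalar one. By normality of $N$ with constant $D$ and the operator bound $\|P^{*}_{\mu}(t-s)\|\leq M^{*}/\Gamma(\mu)$ from Lemma~\ref{lem:Relax-bounds}, it follows that
\begin{align*}
\|u(t)\|\leq \frac{D M^{*}(C^{*}+C)}{\Gamma(\mu)}\int_{0}^{t}(t-s)^{\mu-1}\|u(s)\|\,ds .
\end{align*}
This is exactly the hypothesis of Lemma~\ref{lem:Gronwall} with $a(t)\equiv 0$, $b=D M^{*}(C^{*}+C)/\Gamma(\mu)$ and $\beta=\mu$; since the forcing term vanishes, the conclusion forces $\|u(t)\|\equiv 0$, so $\overline{x}=\underline{x}$ on $[0,t_{1}]$, in particular at $t_{1}$.

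Finally I would propagate this across the remaining intervals $J_{k}'=(t_{k},t_{k+1}]$ by induction on $k$. Having established $\overline{x}(t_{i})=\underline{x}(t_{i})$ for every $i\leq k$, the impulsive contributions $S^{*}_{\mu,\nu}(t-t_{i})\phi_{i}(\overline{x}(t_{i}))$ and $S^{*}_{\mu,\nu}(t-t_{i})\phi_{i}(\underline{x}(t_{i}))$ cancel identically in $u(t)$, so the preceding display reappears verbatim and Gronwall again yields $u\equiv\theta$ on $J_{k}'$. After finitely many steps $\overline{x}=\underline{x}$ on all of $J$, which proves the mild solution in $[y_{0},z_{0}]$ is unique. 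The only genuinely delicate point is this transfer step: one must keep the full order sandwich $\theta\leq u\leq\cdots$ intact so that normality applies, and must verify that the impulse terms really do cancel at each stage so that the forcing term in the Gronwall estimate remains identically zero; the remaining computations are routine.
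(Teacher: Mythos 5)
Your proposal is correct and follows essentially the same route as the paper: subtract the two fixed-point (mild solution) equations, bound the difference via $A(1)$ and $A(4)$ together with positivity of $P^{*}_{\mu}$, convert to a scalar inequality by normality of $N$ with Lemma~\ref{lem:Relax-bounds}, and conclude $\overline{x}\equiv\underline{x}$ on each $J_{k}'$ by the Gronwall Lemma~\ref{lem:Gronwall}. In fact you make explicit two points the paper leaves implicit --- that uniqueness reduces to $\overline{x}=\underline{x}$ since every solution in $[y_{0},z_{0}]$ is squeezed between the extremals, and that the impulse terms cancel on later intervals once $\overline{x}(t_{i})=\underline{x}(t_{i})$ is known for $i\leq k$ --- so your write-up is, if anything, more complete.
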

\begin{proof}
If $\overline{x}$ and $\underline{x}$ are the maximal and the minimal solution of the impulsive system (\ref{eqn:Monotone-impul-Hilfer}), then to prove the uniqueness, it has to be proved that $\overline{x}=\underline{x}$. Like in the previous proof, the theorem is proved interval by interval. Let $t\in J_{0}^{'}$. Using (\ref{eqn:Monotone-fixed}) for both the solutions results in,
\begin{align*}
\theta \leq &\enspace \overline{x}(t)-\underline{x}(t)=\mathcal{G}\overline{x}(t)-\mathcal{G}\underline{x}(t)\\
= & \int_{0}^{t}(t-s)^{\mu-1}P^{*}_{\mu}(t-s)\big[(g(s,\overline{x}(t))- g(s,\underline{x}(t)))+C(\overline{x}(t)-\underline{x}(t))\big]ds\\
\leq & \int_{0}^{t}(t-s)^{\mu-1}P^{*}_{\mu}(t-s)(C^{*}+C)(\overline{x}(t)-\underline{x}(t))ds.
\end{align*}
Using the normality of the positive cone $N$,
\begin{align*}
\|\overline{x}(t)-\underline{x}(t)\|\leq \dfrac{DM^{*}}{\Gamma (\mu)}(C^{*}+C)\int_{0}^{t}(t-s)^{\mu-1}\|\overline{x}(t)-\underline{x}(t)\|ds.
\end{align*}
By Gronwall inequality, $\|\overline{x}(t)-\underline{x}(t)\|=0$. Which implies $\overline{x}(t)\equiv\underline{x}(t)$. Calculating in the similar way results in $\overline{x}(t)\equiv\underline{x}(t)$ for $t\in J_{k}^{'}$, for $k=1,2,\ldots,l$, that is for every interval $J_{k}^{'}$. The uniqueness is thus proved.
\end{proof}
\section{Conclusion}
The study of existence of upper and lower solution of impulsive fractional system with Hilfer fractional derivative is not yet done so far. The objective of this paper is to study the existence of the mild solutions for an impulsive Hilfer fractional evolution equation where the operator generates positive analytic semigroup.  The existence can be further studied when the operator $A$ generates a $C_{0}$-semigroup.

\section*{Acknowledgement}
The work of the first author is supported by the Women Scientist Scheme A (WOS-A) of the Department of Science and Technology, India,
through Project No. SR/WOS-A/PM-18/2016.

\end{document}